\providecommand{\U}[1]{\protect\rule{.1in}{.1in}}
\newtheorem{theorem}{Theorem}
\theoremstyle{plain}
\newtheorem{corollary}{Corollary}
\newtheorem{definition}{Definition}
\newtheorem{example}{Example}
\newtheorem{lemma}{Lemma}
\numberwithin{equation}{section}
\begin{document}
\title{New Jensen-type inequalities}
\author{Constantin P. Niculescu}
\address{University of Craiova, Department of Mathematics, Street A. I. Cuza 13,
Craiova RO-200585, Romania}
\email{cniculescu47@yahoo.com}
\author{C\u{a}t\u{a}lin Irinel Spiridon}
\address{University of Craiova, Department of Mathematics, Street A. I. Cuza 13,
Craiova, RO-200585, Romania }
\email{catalin\_gsep@yahoo.com}
\date{March 14, 2012}
\subjclass[2000]{Primary 26A51, 26 D15; Secondary 28A25}
\keywords{Jensen's inequality, convex function, almost convex function, supporting hyperplane}

\begin{abstract}
We develop a new framework for the Jensen-type inequalities that allows us to
deal with functions not necessarily convex and Borel measures not necessarily positive.

\end{abstract}
\maketitle

It is well known the important role played by the classical inequality of
Jensen in probability theory, economics, statistical physics, information
theory etc. See \cite{NP2006} and \cite{PPT}. In recent years, a number of
authors have noticed the possibility to extend this inequality to the
framework of functions that are mixed convex (in the sense of the existence of
one inflection point). See \cite{Cz2006}, \cite{CP} and \cite{FN2007}. In all
these papers one assumes that both the function and the measure under
consideration verify certain conditions of symmetry. However the inequality of
Jensen is much more general as shows the following simple remark. Suppose that
$K$ is a convex subset of the Euclidean space $\mathbb{R}^{N}$ carrying a
Borel probability measure $\mu$. Then every $\mu$-integrable function
$f:K\rightarrow\mathbb{R}$ that admits a supporting hyperplane at the
barycenter of $\mu,$
\begin{equation}
b_{\mu}=\int_{K}xd\mu(x), \tag{$B$}\label{bar}%
\end{equation}
verifies the Jensen inequality%
\begin{equation}
f(b_{\mu})\leq\int_{K}f(x)d\mu(x). \tag{$J$}\label{J1}%
\end{equation}

Indeed, the existence of a supporting hyperplane at $b_{\mu}$ is equivalent to
the existence of an affine function $h(x)=\langle x,v\rangle+c$ such that%
\[
f(b_{\mu})=h(b_{\mu})\text{ and }f(x)\geq h(x)\text{ for all }x\in K.
\]
Then%
\[
f(b_{\mu})=h(b_{\mu})=h\left(  \int_{K}xd\mu(x)\right)  =\int_{K}%
h(x)d\mu(x)\leq\int_{K}f(x)d\mu(x).
\]

As is well known, the convexity assures the existence of a supporting
hyperplane at each interior point. See \cite{NP2006}, Theorem 3.7.1, p. 128.
This explains why Jensen's inequality works nicely in that context. The aim of
our paper is to extend the validity of Jensen's inequality outside mixed
convexity and also outside the framework of Borel probability measures.

In order to make our approach easily understandable we will restrict ourselves
to the case of functions of one real variable. However most of our results
extends easily to higher dimensions, by replacing the usual intervals by
$N$-dimensional intervals and symmetry with respect to a point by symmetry
with respect to a hyperplane. See Example 3 below.

We start with the following version of the Jensen inequality for mixed convex
functions that discards any assumption on the symmetry of the involved measure.

\begin{theorem}
\label{Thm1}Suppose that $f$ is a real-valued function defined on an interval
$[a,b]$ and $c$ is a point in $[a,\frac{a+b}{2}]$ such that:

$i)$ $f(c-x)+f(c+x)=2f(c)$ whenever $c\pm x\in\lbrack a,b];$

$ii)$ $f|_{[c,b]}$ is convex.

Then%
\[
f\left(  b_{\mu}\right)  \leq\int_{a}^{b}f(x)d\mu(x),
\]

for every Borel probability measure $\mu$ on $[a,b]$ whose barycenter lies in
the interval $[2c-a,b].$

The last inequality works in the reverse way when $f|_{[c,b]}$ is concave.
\end{theorem}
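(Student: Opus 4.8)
The plan is to reduce the statement to the supporting‑hyperplane criterion recalled in the introduction, so that it suffices to exhibit, for each admissible $\mu$, an affine function $h$ with $h(b_\mu)=f(b_\mu)$ and $f(x)\ge h(x)$ for all $x\in[a,b]$; the three–line computation recalled above then yields $f(b_\mu)\le\int_a^b f\,d\mu$ (positivity of $\mu$ being used to integrate the inequality $f\ge h$, and the total mass $1$ being used in $h(b_\mu)=\int_a^b h\,d\mu$).

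So fix a Borel probability measure $\mu$ with $b_\mu\in[2c-a,b]$. The hypothesis $c\le\frac{a+b}{2}$ means exactly that $2c-a\le b$, and since $c\ge a$ we also have $2c-a\ge c$; hence $2c-a\in[c,b]$ and, a fortiori, $b_\mu\in[c,b]$. As $f|_{[c,b]}$ is convex it has a supporting line $h_0(x)=px+q$ at $b_\mu$ (if $b_\mu$ is an endpoint of $[c,b]$ at which no supporting line exists, then $b_\mu\in\{a,b\}$, which forces $\mu$ to be a Dirac mass and makes the asserted inequality a trivial equality). Put $g_0=f-h_0$. Then $g_0\ge0$ on $[c,b]$ with $g_0(b_\mu)=0$, so $b_\mu$ is a minimum point of $g_0$ over $[c,b]$; and since every affine function satisfies $h_0(c-x)+h_0(c+x)=2h_0(c)$, property $i)$ passes to $g_0$, i.e. $g_0(c-x)+g_0(c+x)=2g_0(c)$ whenever $c\pm x\in[a,b]$. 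Writing $y=c-x$ this gives $g_0(y)=2g_0(c)-g_0(2c-y)$ for $y\in[a,c]$ (with $2c-y\in[c,2c-a]$), so $g_0|_{[a,c]}$ is concave, being a constant minus a convex function.

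It remains to prove $g_0\ge0$ on all of $[a,b]$, and only the subinterval $[a,c]$ is in doubt. This is where the barycenter hypothesis does the work. Since $g_0$ is convex on $[c,b]$ with minimum at $b_\mu$, it is non‑increasing on $[c,b_\mu]$; and $c\le 2c-a\le b_\mu$, so $g_0(2c-a)\le g_0(c)$. Taking $x=c-a$ in the symmetry relation yields $g_0(a)=2g_0(c)-g_0(2c-a)\ge g_0(c)\ge0$. Thus $g_0|_{[a,c]}$ is concave with nonnegative values at both endpoints, hence $g_0\ge0$ on $[a,c]$, and therefore $g_0\ge0$ throughout $[a,b]$. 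Equivalently $f(x)\ge h_0(x)$ on $[a,b]$ with equality at $b_\mu$, so $h_0$ is a supporting hyperplane of $f$ at $b_\mu$ and the reduction finishes the proof. The reverse inequality when $f|_{[c,b]}$ is concave follows at once by applying the result to $-f$, which satisfies $i)$ with $f(c)$ replaced by $-f(c)$ and has $(-f)|_{[c,b]}$ convex.

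I expect the only genuine obstacle to be the inequality $g_0(2c-a)\le g_0(c)$: the whole argument turns on recognizing that the condition $b_\mu\ge 2c-a$ places the reflected abscissa $2c-a$ on the non‑increasing branch of the convex function $g_0|_{[c,b]}$, which is precisely what upgrades property $i)$ to the bound $g_0(a)\ge0$. Once this is seen, the rest is the elementary fact that a concave function dominates the chord through its endpoint values, together with the supporting‑hyperplane remark already available.
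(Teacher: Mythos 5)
Your proof is correct. It shares the paper's overall strategy---reduce to the supporting-line remark from the introduction---but the implementation is genuinely different. The paper replaces $f$ on $[a,2c-a]$ by the chord $h$ through $(a,f(a))$ and $(2c-a,f(2c-a))$, observes that the resulting spliced function $g$ is convex, and takes a support line $\ell$ of $g$ at $b_{\mu}$, asserting that ``$h\geq f$'' makes $\ell$ a support line of $f$ as well. You instead start from a support line $h_{0}$ of the convex branch $f|_{[c,b]}$ at $b_{\mu}$ and propagate $f\geq h_{0}$ to $[a,c]$ directly: the point symmetry $i)$ turns $g_{0}=f-h_{0}$ into a concave function on $[a,c]$, and the hypothesis $b_{\mu}\geq 2c-a$ places $2c-a$ on the non-increasing part of $g_{0}|_{[c,b]}$, giving $g_{0}(a)\geq g_{0}(c)\geq 0$ and hence $g_{0}\geq 0$ on $[a,c]$. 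Your route buys something concrete: the paper's chord satisfies $h\geq f$ only on $[c,2c-a]$ (the symmetry reverses the inequality to $h\leq f$ on $[a,c]$), and when $b_{\mu}=2c-a$ not every support line of $g$ at $b_{\mu}$ supports $f$ --- for $f(x)=x^{3}$ on $[-1,3]$ with $c=0$ and $b_{\mu}=1$, the line $\ell(x)=x$ supports $g$ at $1$ but exceeds $f$ on $(0,1)$ --- so the paper's choice of $\ell$ must in fact be the one with slope in the subdifferential of $f|_{[c,b]}$, which is exactly the line you construct. Your handling of the degenerate cases ($b_{\mu}\in\{a,b\}$ forcing $\mu$ to be a Dirac mass) and of the concave case via $-f$ is also fine.
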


\begin{proof}
The case where $c=a$ is covered by the classical inequality of Jensen so we
may assume that $c\in(a,\frac{a+b}{2}).$ In this case the point $2c-a$ is
interior to $[a,b].$ By our hypotheses, the barycenter $b_{\mu}$ lies in the
interval $\left[  2c-a,b\right]  .$ If $b_{\mu}=b$, then $\mu=\delta_{b}$ and
the conclusion of Theorem \ref{Thm1} is clear. If $b_{\mu}$ is interior to
$[a,b],$ we will denote by $h$ the affine function joining the points
$(a,f(a))$ and $(2c-a,f(2c-a))$ and we will consider the function%
\begin{equation}
g(x)=\left\{
\begin{array}
[c]{cl}%
h(x) & \text{if }x\in\lbrack a,2c-a]\\
f(x) & \text{if }x\in\lbrack2c-a,b].
\end{array}
\right.  \tag{$1$}\label{funcg}%
\end{equation}
Clearly, $g$ is convex and this fact motivates the existence of a support line
$\ell$ of $g$ at $b_{\mu}.$ See \cite{NP2006}, Lemma 1.5.1, p. 30. Since
$h\geq f,$ then necessarily $\ell$ is a support line at $b_{\mu}$ also for
$f.$ By a remark above, this ends the proof.
\end{proof}

A useful consequence of Theorem \ref{Thm1} in the case of absolutely
continuous measures is as follows:

\begin{corollary}
\label{Cor1}Suppose that $f:[-b,b]\rightarrow\mathbb{R}$ is an odd function,
whose restriction to $[0,b]$ is convex and $p:[-b,b]\rightarrow\lbrack
0,\infty)$ is a nondecreasing function that does not vanish on $(-b/3,b].$
Then for every $a\in\lbrack-b/3,b),$%
\[
f\left(  \frac{1}{\int_{a}^{b}p(x)dx}\int_{a}^{b}xp(x)dx\right)  \leq\frac
{1}{\int_{a}^{b}p(x)dx}\int_{a}^{b}f(x)p(x)dx.
\]

\end{corollary}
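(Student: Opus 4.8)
The plan is to reduce the statement to Theorem \ref{Thm1} applied on the subinterval $[a,b]$ with center $c=0$, the only genuinely non-routine ingredient being the verification that the barycenter lands in the admissible window. Throughout, let $\mu$ denote the Borel probability measure on $[a,b]$ with density $p(x)/\int_a^b p(t)\,dt$; this is well defined because $p\ge 0$ is strictly positive on $(a,b]\subseteq(-b/3,b]$, so $\int_a^b p>0$, and since a convex function on $[0,b]$ is bounded and Borel on any $[a,b]$, the odd extension $f$ is $\mu$-integrable. With these conventions the barycenter of $\mu$ is
\[
b_{\mu}=\frac{\int_a^b x\,p(x)\,dx}{\int_a^b p(x)\,dx},
\]
and the inequality to be proved is precisely $f(b_{\mu})\le\int_a^b f\,d\mu$.

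First I would treat the range $a\in[0,b)$ separately: then $[a,b]\subseteq[0,b]$, so $f$ is convex on $[a,b]$ and the conclusion is the classical Jensen inequality. So assume from now on that $a\in[-b/3,0)$ and set $c=0$. I then check the hypotheses of Theorem \ref{Thm1} on $[a,b]$: the requirement $c\in[a,\tfrac{a+b}{2}]$ is $a\le 0\le\tfrac{a+b}{2}$, which holds since $-b<-b/3\le a<0$; hypothesis $i)$ reads $f(-x)+f(x)=2f(0)=0$, i.e.\ it is exactly the oddness of $f$; and hypothesis $ii)$ is the assumed convexity of $f|_{[0,b]}$.

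The crux is to show that $b_{\mu}$ lies in $[2c-a,b]=[-a,b]$. The bound $b_{\mu}\le b$ is immediate. For the lower bound I would prove the sharper fact that $b_{\mu}\ge m:=\tfrac{a+b}{2}$; this suffices, because $a\ge -b/3$ is equivalent to $3a+b\ge 0$, i.e.\ to $m\ge -a$. Now $b_{\mu}\ge m$ is equivalent to $\int_a^b (x-m)\,p(x)\,dx\ge 0$, and substituting $x=m-t$ on $[a,m]$ and $x=m+t$ on $[m,b]$ (with $t$ running over $[0,\tfrac{b-a}{2}]$ in both) turns this integral into
\[
\int_0^{(b-a)/2} t\,\bigl(p(m+t)-p(m-t)\bigr)\,dt,
\]
which is nonnegative since $p$ is nondecreasing and $m+t\ge m-t$.

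Having verified all the hypotheses, Theorem \ref{Thm1} yields $f(b_{\mu})\le\int_a^b f\,d\mu$, which is the assertion. The main obstacle is the barycenter estimate of the previous paragraph, and the symmetrization substitution is the device that lets the monotonicity of $p$ force $b_{\mu}$ past the midpoint; everything else is bookkeeping about which subinterval each quantity inhabits.
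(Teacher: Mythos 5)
Your proof is correct, and it follows the paper's overall strategy --- dispose of $a\geq 0$ by classical Jensen, then apply Theorem \ref{Thm1} with $c=0$ after checking that the barycenter $b_{\mu}$ of $p(x)\,dx/\int_{a}^{b}p$ lies in $[-a,b]$ --- but you carry out the key barycenter estimate by a genuinely different device. The paper proves $\int_{a}^{b}(x+a)p(x)\,dx\geq 0$ directly: it truncates to $\int_{a}^{-3a}$ (possible since $-3a\leq b$), splits at $-a$, bounds $p$ from below by $p(-a)$ on $[-a,-3a]$, and uses the identity $\int_{-a}^{-3a}(x+a)\,dx=-\int_{a}^{-a}(x+a)\,dx$ to collapse everything into $\int_{a}^{-a}(x+a)\left(p(x)-p(-a)\right)dx\geq 0$. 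You instead prove the cleaner intermediate fact that a nondecreasing density pushes the barycenter past the midpoint, $b_{\mu}\geq\frac{a+b}{2}$, by symmetrizing about the midpoint, and then observe that $\frac{a+b}{2}\geq -a$ is exactly the constraint $a\geq -b/3$; both your midpoint lemma and your reduction are correct. Your route isolates why the constant $b/3$ appears and the midpoint lemma is reusable elsewhere; the paper's route only compares $p$ with the single value $p(-a)$, which is what lets the authors remark afterwards that monotonicity can be relaxed to $p(x)\leq p(-a)\leq p(y)$ for $x\leq -a\leq y$, whereas your symmetrization would naturally relax to a condition centered at the midpoint instead. Both arguments are valid for the corollary as stated.
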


\begin{proof}
The case where $a\geq0$ is covered by the classical inequality of Jensen.

If $a<0,$ then%
\begin{align*}
\int_{a}^{b}(x+a)p(x)dx  &  \geq\int_{a}^{-3a}(x+a)p(x)dx\\
&  =\int_{a}^{-a}(x+a)p(x)dx+\int_{-a}^{-3a}(x+a)p(x)dx\\
&  \geq\int_{a}^{-a}(x+a)p(x)dx+p(-a)\int_{-a}^{-3a}(x+a)dx\\
&  =\int_{a}^{-a}(x+a)p(x)dx-p(-a)\int_{a}^{-a}(x+a)dx\\
&  =\int_{a}^{-a}(x+a)\left(  p(x)-p(-a)\right)  dx\geq0,
\end{align*}
and thus Theorem 1 applies.
\end{proof}

An inspection of the argument of Corollary 1 shows that the monotonicity
hypothesis on $p$ can be relaxed by asking only the integrability of $p$ and
the fact that $p(x)\leq p(-a)\leq p(y)$ for all $x$ and $y$ with $x\leq-a\leq
y.$ However, simple examples show that the restriction $a\in\lbrack-b/3,b]$ in
Corollary 1 cannot be dropped.

Consider now the discrete version of Theorem 1.

\begin{corollary}
\label{CorD}Suppose that $f$ is a real-valued function defined on an interval
$I$ that contains the origin such that $f|_{I\cap\lbrack0,\infty)}$ is a
convex function and $f(-x)=-f(x)$ whenever $x$ and $-x$ belong to $I.$ Then
for every family of points $a_{1},...,a_{n}$ of $I$ and every family of
weights $p_{1},...,p_{n}\in\lbrack0,\infty)$ such that $\sum_{k=1}^{n}p_{k}=1$
and%
\[
\sum_{k=1}^{n}p_{k}a_{k}+\min\left\{  a_{1},...,a_{n}\right\}  \geq0,
\]
we have%
\[
f\left(  \sum_{k=1}^{n}p_{k}a_{k}\right)  \leq\sum_{k=1}^{n}p_{k}f(a_{k}).
\]

\end{corollary}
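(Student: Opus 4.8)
The plan is to reduce Corollary \ref{CorD} to Theorem \ref{Thm1} by choosing an appropriate interval and an appropriate discrete measure. First I would dispose of the trivial situation: if all the $a_k$ are nonnegative, then $f$ restricted to the relevant part of $I$ is convex and the conclusion is the classical Jensen inequality. So I may assume $m:=\min\{a_1,\dots,a_n\}<0$. Set $a=m$ and $b=\max\{a_1,\dots,a_n\}$; since $I$ contains the origin and all the $a_k$, the interval $[a,b]$ lies in $I$ and contains $0$. If $b\le -a$ I would enlarge $b$ to $-a$ (this is harmless: $f$ on $[-a,-a\,]$ is still covered, or more simply one works on the symmetric interval $[-|a|,|a|]$ intersected with making $c=0$ available), so that $0=c$ sits in $[a,\tfrac{a+b}{2}]$.

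Next I would verify the two hypotheses of Theorem \ref{Thm1} with $c=0$. Hypothesis $ii)$, convexity of $f|_{[0,b]}$, is exactly the assumed convexity of $f|_{I\cap[0,\infty)}$. Hypothesis $i)$, the symmetry $f(-x)+f(x)=2f(0)$ for $x,-x\in[a,b]$, follows from oddness: $f(-x)=-f(x)$, and taking $x=0$ in the oddness relation forces $f(0)=0$, so $f(-x)+f(x)=0=2f(0)$. Then I would form the discrete Borel probability measure $\mu=\sum_{k=1}^{n}p_k\delta_{a_k}$ on $[a,b]$. Its barycenter is $b_\mu=\sum_{k=1}^n p_k a_k$, and its Jensen integral is $\int f\,d\mu=\sum_{k=1}^n p_k f(a_k)$.

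The one point that needs genuine checking — and the only place the hypothesis $\sum p_k a_k+\min\{a_k\}\ge 0$ is used — is the barycenter condition of Theorem \ref{Thm1}, namely that $b_\mu$ lies in $[2c-a,b]=[-a,b]$ (recall $c=0$). The inequality $b_\mu\le b=\max_k a_k$ is automatic since $b_\mu$ is a weighted average of the $a_k$. The inequality $b_\mu\ge -a$ is precisely $\sum_{k=1}^n p_k a_k\ge -\min\{a_1,\dots,a_n\}$, i.e. $\sum_{k=1}^n p_k a_k+\min\{a_1,\dots,a_n\}\ge 0$, which is the standing assumption. Hence Theorem \ref{Thm1} applies and yields
\[
f\Big(\sum_{k=1}^{n}p_k a_k\Big)=f(b_\mu)\le \int_a^b f\,d\mu=\sum_{k=1}^{n}p_k f(a_k),
\]
which is the desired inequality. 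I do not anticipate any real obstacle here; the only care needed is the bookkeeping that ensures $c=0$ is legitimately a point of $[a,\tfrac{a+b}{2}]$ after possibly symmetrizing the interval, and the observation that oddness forces $f(0)=0$ so that the symmetry hypothesis $i)$ holds verbatim.
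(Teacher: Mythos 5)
Your proposal is correct and is essentially the paper's own (implicit) argument: the paper offers no separate proof of this corollary, presenting it simply as ``the discrete version of Theorem \ref{Thm1}'', i.e.\ precisely your application of Theorem \ref{Thm1} with $c=0$ and the measure $\mu=\sum_{k=1}^{n}p_k\delta_{a_k}$, whose barycenter condition $b_\mu\in[-a,b]$ is exactly the hypothesis $\sum_k p_k a_k+\min_k a_k\ge 0$. The only superfluous step is your case $b\le -a$: it never occurs, since $\max_k a_k\ge\sum_k p_k a_k\ge-\min_k a_k$ already follows from that hypothesis, so $c=0$ automatically lies in $[a,\tfrac{a+b}{2}]$.
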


The conclusion of Corollary \ref{CorD} can be considerably improved when all
weights $p_{k}$ are equal.

\begin{corollary}
\label{CorStrongD}Suppose that $f$ is a real-valued function defined on an
interval $I$ that contains the origin. If $f|_{I\cap\lbrack0,\infty)}$ is a
convex function and $f(-x)=-f(x)$ whenever $x$ and $-x$ belong to $I,$ then
for every family of points $a_{1},...,a_{n}$ of $I$ such that%
\[
\text{ }\sum_{k=1}^{n}a_{k}+(n-2)\min\left\{  a_{1},...,a_{n}\right\}  \geq0
\]
we have%
\[
f\left(  \frac{1}{n}\sum_{k=1}^{n}a_{k}\right)  \leq\frac{1}{n}\sum_{k=1}%
^{n}f(a_{k}).
\]

\end{corollary}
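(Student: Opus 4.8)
The plan is to prove the inequality by a smoothing (``peeling'') argument that removes the negative entries one at a time, keeping the arithmetic mean fixed and the right-hand side nonincreasing, until the classical Jensen inequality on $I\cap[0,\infty)$ can be invoked. If all the $a_k$ are $\ge 0$, the claim is just Jensen's inequality for the convex function $f|_{I\cap[0,\infty)}$ and the probability measure $\frac1n\sum_{k=1}^n\delta_{a_k}$, and for $n=1$ there is nothing to prove; so I would assume $n\ge 2$ and, after relabeling, $a_1\le\cdots\le a_n$ with $a_1<0$. Rewriting the hypothesis as $\sum_{k=2}^n(a_1+a_k)\ge 0$ and using that every summand is $\le a_1+a_n$ gives $a_1+a_n\ge 0$; in particular $0<-a_1\le a_n\le\sup I$, so $-a_1\in I$ and the oddness hypothesis yields $f(0)=0$ and $f(a_1)=-f(-a_1)$.

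The one analytic ingredient I need is the monotonicity of the difference quotients of the convex function $g:=f|_{I\cap[0,\infty)}$: for $0\le u\le v$ and $s\ge 0$ with $u+s,\,v+s\in I\cap[0,\infty)$ one has $g(u+s)-g(u)\le g(v+s)-g(v)$ (see \cite{NP2006}). Applying this with $u=0$, $v=-a_1$ and $s=a_1+a_n\ (\ge 0)$, so that $u+s=a_1+a_n\in[0,a_n]$ and $v+s=a_n$, yields $f(a_1+a_n)-f(0)\le f(a_n)-f(-a_1)$, i.e.
\[
f(0)+f(a_1+a_n)\le f(a_1)+f(a_n).
\]
Hence if we replace the pair $(a_1,a_n)$ by the pair $(0,\,a_1+a_n)$, we do not change $\sum_k a_k$, we do not increase $\sum_k f(a_k)$, we keep all the points in $I$ (because $0\in I$ and $0\le a_1+a_n\le a_n$), and the hypothesis is preserved: the new minimum is $\ge a_1$ while the sum is unchanged and $n-2\ge 0$. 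The essential gain is that the new family has strictly fewer negative entries.

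Iterating this replacement at most $n$ times, we reach a family contained in $I\cap[0,\infty)$ having the same mean $\frac1n\sum_{k=1}^n a_k$ as the original one; Jensen's inequality for $g$ bounds $f$ of this mean by the average of the $f$-values of that final family, which by construction is $\le\frac1n\sum_{k=1}^n f(a_k)$, so chaining the inequalities produced along the way gives the assertion. The main (and essentially only) point that needs care — and the step I would write out fully — is the bookkeeping showing that the procedure never stalls: as long as the current configuration still has a negative entry, the same computation $\sum_{k=2}^n(b_1+b_k)\ge 0\Rightarrow b_1+b_n\ge 0$ applied to it (writing $b_1\le\cdots\le b_n$ for its entries, and using that the hypothesis still holds for it) shows that its maximum is positive and that its minimum plus its maximum is $\ge 0$, so a further replacement of the type above is available. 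I do not expect any genuine difficulty beyond this verification and the elementary fact about difference quotients of convex functions.
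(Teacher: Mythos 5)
Your argument is correct, but it takes a genuinely different route from the paper's. The paper isolates only the smallest (negative) point $a_{1}$: from the hypothesis it deduces $\frac{1}{n-1}\sum_{k=2}^{n}a_{k}\geq-a_{1}\geq-a_{2}$, applies the weighted discrete version (Corollary \ref{CorD}) to the remaining points $a_{2},\dots,a_{n}$, and then recombines $a_{1}$ with their average by means of the convex majorant $g$ of formula (\ref{funcg}) (affine on $[a_{1},-a_{1}]$, equal to $f$ to the right of $-a_{1}$), using that $g\geq f$ on $[0,\infty)\cap I$ while $g(a_{1})=f(a_{1})$ and $g=f$ at the average of $a_{2},\dots,a_{n}$. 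You instead run an iterative smoothing: replace the pair $(\min,\max)$ by $(0,\min+\max)$, which preserves the sum and the constraint, does not increase $\sum_{k}f(a_{k})$ because $f(0)+f(u+v)\leq f(u)+f(v)$ for $u<0\leq u+v$ (oddness plus the monotonicity of increments of the convex restriction), and strictly decreases the number of negative entries, so that after at most $n-1$ steps the classical Jensen inequality applies. Your bookkeeping --- that the preserved hypothesis forces $\min+\max\geq0$ at every stage, so a further step is always available and $-\min$ lies in $I$ --- is exactly the point that needs checking, and you check it. The paper's proof is shorter given Corollary \ref{CorD} and the function $g$ already in hand from Theorem \ref{Thm1}; yours is self-contained, needs only the elementary increment inequality for convex functions, and treats several negative entries by one uniform mechanism instead of delegating them to Corollary \ref{CorD}.
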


\begin{proof}
It suffices to consider the case where $a_{1}\leq\cdots\leq a_{n}$ and
$a_{1}<0$. According to our hypothesis, $\sum_{k=1}^{n}a_{k}>0$ and
\[
\frac{1}{n-1}\sum_{k=2}^{n}a_{k}\geq-a_{1}\geq-a_{2}.
\]
By Corollary \ref{CorD},%
\[
f\left(  \frac{1}{n-1}\sum_{k=2}^{n}a_{k}\right)  \leq\frac{1}{n-1}\sum
_{k=2}^{n}f(a_{k}),
\]
and taking into account the function $g$ given by the formula (\ref{funcg}) we
infer that%
\begin{multline*}
f\left(  \frac{1}{n}\sum_{k=1}^{n}a_{k}\right)  \leq g\left(  \frac{1}{n}%
\sum_{k=1}^{n}a_{k}\right)  =g\left(  \frac{1}{n}\cdot a_{1}+\frac{n-1}%
{n}\cdot\frac{1}{n-1}\sum_{k=2}^{n}a_{k}\right) \\
\leq\frac{1}{n}g(a_{1})+\frac{n-1}{n}g(\frac{1}{n-1}\sum_{k=2}^{n}a_{k})\\
=\frac{1}{n}f(a_{1})+\frac{n-1}{n}f(\frac{1}{n-1}\sum_{k=2}^{n}a_{k})\leq
\frac{1}{n}\sum_{k=1}^{n}f(a_{k}).
\end{multline*}
The proof is done.
\end{proof}

A simple example illustrating Corollary \ref{CorStrongD} is
\[
\tan\left(  \frac{x+y+z}{3}\right)  \leq\frac{\tan x+\tan y+\tan z}{3},
\]
for every $x,y,z\in(-\pi/6,\pi/2)$ with $x+y+z+\min\left\{  x,y,z\right\}
\geq0.$

Starting with the pioneering work of J. F. Steffensen \cite{stef19}, a great
deal of research was done to extend the Jensen inequality outside the
framework of probability measures. An account on the present state of art can
be found in the monograph \cite{NP2006}, Sections 4.1 and 4.2. We will recall
here some basic facts for the convenience of the reader.

\begin{definition}
\label{Def1}\emph{A} Steffensen-Popoviciu measure \emph{is any real Borel
measure} $\mu$ \emph{on a compact convex set} $K$ \emph{such that}
$\mu(K)>0\ $\emph{and}%
\[
\int_{K}\,f(x)\,d\mu(x)\geq0\quad\text{\emph{for every continuous convex
function}\quad}f:K\rightarrow\mathbb{R}_{+}.
\]
\ 
\end{definition}

In the case of intervals, a complete characterization of this class of
measures is offered by the following result, independently due to T. Popoviciu
\cite{Po}, and A. M. Fink \cite{Fi}:

\begin{lemma}
\label{LemPF}Let $\mu$ be a real Borel measure on an interval $[a,b]$ with
$\mu([a,b])>0.$ Then $\mu$ is a Steffensen-Popoviciu measure if, and only if,
it verifies the following condition of endpoints positivity,%
\[
\int_{a}^{t}(t-x)\,d\mu(x)\geq0\ \text{and}\ \int_{t}^{b}(x-t)\,d\mu(x)\geq0
\]
$for\ every\ t\in\lbrack a,b].$
\end{lemma}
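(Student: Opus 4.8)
The plan is to prove the two implications separately; the forward one is immediate, while the converse rests on approximating nonnegative convex functions by piecewise linear ones.

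For the forward implication, I would fix $t\in[a,b]$ and test the defining inequality of a Steffensen--Popoviciu measure against the two continuous nonnegative convex ``angle functions'' $x\mapsto(t-x)_{+}$ and $x\mapsto(x-t)_{+}$ on $[a,b]$. Since $(t-x)_{+}=0$ for $x\ge t$ and $(x-t)_{+}=0$ for $x\le t$, the resulting inequalities $\int_{a}^{b}(t-x)_{+}\,d\mu(x)\ge0$ and $\int_{a}^{b}(x-t)_{+}\,d\mu(x)\ge0$ are precisely the two conditions of endpoints positivity. (A real Borel measure on a compact interval has finite total variation, so every integral written here is finite, is linear in the integrand, and is continuous with respect to uniform convergence of the integrand; these facts are used freely throughout.)

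For the converse, assume the two conditions of endpoints positivity; since $\mu([a,b])>0$ is part of the hypothesis, it remains only to show $\int_{a}^{b}f\,d\mu\ge0$ for every continuous convex $f:[a,b]\to\mathbb{R}_{+}$. First I would reduce to the piecewise linear case: letting $f_{N}$ be the function that interpolates $f$ at the nodes of an equidistant partition of $[a,b]$ with mesh tending to $0$, each $f_{N}$ is piecewise linear, convex, and satisfies $0\le f\le f_{N}$ (a convex function lies below its chords; in particular $f_{N}\ge0$), while $f_{N}\to f$ uniformly by uniform continuity of $f$ on $[a,b]$; hence $\int f_{N}\,d\mu\to\int f\,d\mu$ and it suffices to treat piecewise linear convex $f\ge0$. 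For such an $f$, let $m=\min_{[a,b]}f\ge0$, attained at a point $x_{0}$, and split $f-m=g_{1}+g_{2}$, where $g_{1}$ equals $f-m$ on $[x_{0},b]$ and $0$ on $[a,x_{0}]$, and $g_{2}$ is its mirror image supported on $[a,x_{0}]$ (one of $g_{1},g_{2}$ being identically $0$ when $x_{0}$ is an endpoint). Each $g_{i}$ is nonnegative, convex and piecewise linear with a one-sided minimum at $x_{0}$, so the standard slope-increment formula writes $g_{1}$ as a finite nonnegative combination $\sum_{i}c_{i}(x-t_{i})_{+}$ with $t_{i}\in[x_{0},b)$ and $g_{2}$ as a finite nonnegative combination $\sum_{j}d_{j}(t_{j}-x)_{+}$ with $t_{j}\in(a,x_{0}]$. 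Integrating against $\mu$ then gives $\int_{a}^{b}f\,d\mu=m\,\mu([a,b])+\sum_{i}c_{i}\int_{t_{i}}^{b}(x-t_{i})\,d\mu+\sum_{j}d_{j}\int_{a}^{t_{j}}(t_{j}-x)\,d\mu\ge0$ by the hypotheses, and passing to the limit completes the argument.

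I expect the only delicate step to be the bookkeeping around the minimum: one must choose the piecewise linear approximants $\ge0$ so that the leftover constant $m$ carries a nonnegative coefficient, and one must arrange the slope-increment representation of a piecewise linear convex function so that it involves only the constant $1$ together with angle functions whose breakpoints lie in $[x_{0},b)$ and $(a,x_{0}]$---so that exactly the two hypothesized integrals, and no uncontrolled integral of a linear function against the signed measure $\mu$, are what one needs to be nonnegative. (Alternatively, one may bypass the approximation altogether by invoking the integral representation of convex functions by nonnegative measures, as in \cite{NP2006}, and argue in the same way.)
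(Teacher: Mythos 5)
Your argument is correct. Note that the paper does not prove Lemma~\ref{LemPF} at all: it quotes the result from Popoviciu and Fink and refers to \cite{NP2006}, p.~179, for the details. What you have written is essentially the standard proof recorded there. The forward implication by testing against the angle functions $(t-x)_{+}$ and $(x-t)_{+}$ is exactly right, and your converse handles the one genuinely delicate point correctly: since $\mu$ is merely a signed measure with $\mu([a,b])>0$, one cannot afford a raw linear term in the representation of a piecewise linear convex $f\geq 0$, so the decomposition must be anchored at the minimum point $x_{0}$, producing only the constant $m\geq 0$ (whose integral is $m\,\mu([a,b])\geq 0$) plus nonnegative combinations of angle functions with breakpoints on either side of $x_{0}$ --- precisely the integrals controlled by the endpoints-positivity hypothesis. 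The reduction to piecewise linear $f$ via the interpolants $f_{N}\geq f$ and the finiteness of the total variation of $\mu$ is also sound. The only cosmetic remark is that the splitting $f-m=g_{1}+g_{2}$ is slightly more bookkeeping than needed (one can write the representation in a single formula using the left and right slope increments away from $x_{0}$), but it is correct as stated, including the degenerate cases where $x_{0}$ is an endpoint.
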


See \cite{NP2006}, p. 179, for details.$\allowbreak$

\begin{example}
\label{ex1}A discrete measure $\mu=\sum_{k\,=\,1}^{n}\,p_{k}\,\delta_{x_{k}}$
(supported by the points $x_{1}\leq...\leq x_{n}$) is a Steffensen-Popoviciu
measure if it verifies Steffensen's condition%
\begin{equation}
\sum_{k\,=\,1}^{n}\,p_{k}>0,\quad\text{and\quad}0\leq\sum_{k\,=\,1}^{m}%
\,p_{k}\leq\sum_{k\,=\,1}^{n}\,p_{k},\quad\text{for every }m\in\{1,...,n\}.
\tag{$2$}\label{dSt}%
\end{equation}

A concrete example is offered by the discrete measure $\frac{5}{9}%
\delta_{\frac{3a+b}{4}}-\frac{1}{9}\delta_{\frac{a+b}{2}}+\frac{5}{9}%
\delta_{\frac{a+3b}{4}}$.
\end{example}

\begin{example}
\label{ex2}According to Lemma \ref{LemPF}, $\left(  \ \frac{x^{2}}{a^{2}%
}-\frac{1}{6}\right)  dx$ is an example of absolutely continuous measure on
the interval $[-a,a]$ that is also a Steffensen-Popoviciu measure. As a
consequence, $\left(  \ \frac{x^{2}}{a^{2}}-\frac{1}{6}\right)  dxdy$ provides
an example of Steffensen-Popoviciu measure on any rectangle $[-a,b]\times
\lbrack c,d]$ with $0<a<b$ and $c<d.$ Indeed, if $h:[-a,b]\times\lbrack
c,d]\rightarrow\mathbb{R}$ is a nonnegative convex function, then
\[
x\rightarrow\left(  \int_{c}^{d}h(x,y)dy\right)
\]
is also a nonnegative convex function, whence
\[
\int_{-a}^{a}\int_{c}^{d}h(x,y)\left(  \frac{x^{2}}{a^{2}}-\frac{1}{6}\right)
dxdy\geq0.
\]
This yields%
\[
\int_{-a}^{b}\int_{c}^{d}h(x,y)\left(  \frac{x^{2}}{a^{2}}-\frac{1}{6}\right)
dxdy\geq0,
\]
and thus $\left(  \frac{x^{2}}{a^{2}}-\frac{1}{6}\right)  dxdy$ is a
Steffensen-Popoviciu measure on the interval $[-a,b]\times\lbrack c,d].$ Since
this class of measures is closed under addition, we infer that $\left(
\frac{x^{2}}{a^{2}}+\frac{y^{2}}{c^{2}}-\frac{1}{3}\right)  dxdy$ is a
Steffensen-Popoviciu measure on the interval $[-a,b]\times\lbrack-c,d]$,
whenever $0<a<b$ and $0<c<d.$
\end{example}

The Steffensen-Popoviciu measures provide the natural framework for the Jensen inequality:

\begin{theorem}
\label{Thm2}Suppose that $\mu$ is a Steffensen-Popoviciu measure on a compact
convex set $K$ $($part of a locally convex separated space $E$). Then $\mu$
admits a barycenter $b_{\mu}$ and for every continuous convex function $f$ on
$K,$%
\[
f(b_{\mu})\leq\frac{1}{\mu(K)}\,\int_{K\,}\,f(x)\,d\mu(x).
\]

\end{theorem}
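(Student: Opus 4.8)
The plan is to mimic the one-dimensional argument of Theorem~\ref{Thm1}, but replace the explicit construction of a convex majorant by a Hahn--Banach / supporting-functional argument, and to handle the existence of the barycenter separately. First I would establish that $\mu$ admits a barycenter. Since $K$ is compact and convex in a locally convex separated space $E$, each continuous linear functional $x^{*}\in E^{*}$ is bounded on $K$, hence $\mu$-integrable (the total variation $|\mu|$ is finite because $\mu$ is a real Borel measure on a compact set), so $x^{*}\mapsto \frac{1}{\mu(K)}\int_{K} x^{*}(x)\,d\mu(x)$ defines a linear functional on $E^{*}$. The standard existence theorem for barycenters of probability measures on compact convex sets (see \cite{NP2006}, Section~4.1) applies to the \emph{normalized positive-looking} data here; one must check that $\frac{1}{\mu(K)}\int_{K} x^{*}\,d\mu$ lies in the appropriate range. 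The key point is that the condition defining a Steffensen--Popoviciu measure, applied to the nonnegative continuous convex functions $x^{*}-\alpha$ and $\alpha-x^{*}$ restricted to where they are nonnegative, forces $\frac{1}{\mu(K)}\int_{K}x^{*}\,d\mu$ to lie between $\min_{K}x^{*}$ and $\max_{K}x^{*}$; combined with a separation argument this shows the candidate barycenter $b_{\mu}$ belongs to $K$.

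Next, granting that $b_{\mu}\in K$ exists, I would invoke the existence of a supporting (affine continuous) functional for the convex function $f$ at $b_{\mu}$: there is $h(x)=x^{*}(x)+c$ with $h\le f$ on $K$ and $h(b_{\mu})=f(b_{\mu})$ (this uses that $f$ is continuous and convex on a convex subset of a locally convex space, so its subdifferential at an interior-like point is nonempty; for a compact $K$ one typically argues via the epigraph and Hahn--Banach). Then the computation is immediate, exactly as in the Introduction:
\[
f(b_{\mu})=h(b_{\mu})=x^{*}(b_{\mu})+c=\frac{1}{\mu(K)}\int_{K}x^{*}(x)\,d\mu(x)+c=\frac{1}{\mu(K)}\int_{K}h(x)\,d\mu(x).
\]
Now I would write $f=h+(f-h)$ with $f-h\ge 0$ continuous and convex, and apply the defining inequality of the Steffensen--Popoviciu measure to $f-h$:
\[
\int_{K}\bigl(f(x)-h(x)\bigr)\,d\mu(x)\ge 0,
\]
which rearranges to $\int_{K} f\,d\mu \ge \int_{K} h\,d\mu = \mu(K)\,f(b_{\mu})$, i.e.\ the claimed inequality.

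I expect the main obstacle to be the first part: showing that the barycenter exists \emph{and lies in $K$} when $\mu$ is only a signed Steffensen--Popoviciu measure rather than a positive measure. For positive probability measures this is classical and uses compactness plus Hahn--Banach separation; here the signed part means one cannot directly say $b_{\mu}$ is a ``weighted average'' sitting in the convex hull. The resolution is precisely the positivity-on-convex-functions hypothesis: affine functions $x^{*}-\alpha$ are convex, and so are their positive parts in the relevant sense, so testing against them pins $\frac{1}{\mu(K)}\int x^{*}\,d\mu$ inside $[\min_{K}x^{*},\max_{K}x^{*}]$ for every $x^{*}$, and a set that is caught between these bounds for all continuous linear functionals must meet $K$ (again by separation, since $K$ is compact convex). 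Once $b_{\mu}\in K$ is secured, the supporting-functional step and the final rearrangement are routine and parallel the scalar proof already given for Theorem~\ref{Thm1}. One should also remark that no symmetry or monotonicity hypothesis on $\mu$ is needed here — the Steffensen--Popoviciu condition alone carries everything — which is the conceptual payoff the section has been building toward.
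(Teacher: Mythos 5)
The paper does not actually prove this theorem; it cites \cite{NP2006}, Theorem 4.2.1. Your overall architecture --- establish the barycenter by testing the Steffensen--Popoviciu condition on affine functions, then reduce Jensen's inequality to the defining positivity applied to $f-h$ for a continuous affine minorant $h$ --- is the standard one and matches the cited proof in spirit. The barycenter half is essentially right: $x^{*}-\min_{K}x^{*}$ and $\max_{K}x^{*}-x^{*}$ are nonnegative continuous convex (affine) functions, so testing pins $\frac{1}{\mu(K)}\int_{K}x^{*}\,d\mu$ inside $[\min_{K}x^{*},\max_{K}x^{*}]$; since this applies to every finite linear combination of functionals, a separation-plus-finite-intersection-property argument on the compact set $K$ produces $b_{\mu}\in K$. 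The decomposition $f=h+(f-h)$ with $f-h\geq 0$ convex continuous, and the final rearrangement, are also correct.

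The genuine gap is the claim that $f$ admits a supporting continuous affine functional at $b_{\mu}$, i.e.\ some $h\leq f$ with $h(b_{\mu})=f(b_{\mu})$. The subdifferential of a continuous convex function can be empty at boundary points of $K$: already for $K=[-1,1]$ and $f(x)=-\sqrt{1-x^{2}}$ there is no affine minorant touching $f$ at $x=\pm 1$. In an infinite-dimensional locally convex space $K$ may have empty interior, so there is no ``interior-like point'' to retreat to, and nothing guarantees that the barycenter of a signed Steffensen--Popoviciu measure avoids the boundary (for $\mu=\delta_{b}$ it sits exactly there). The standard repair, which is what \cite{NP2006} uses: a continuous convex function on a compact convex set is the pointwise \emph{supremum} of the continuous affine functions lying below it (Hahn--Banach applied to the epigraph), so $f(b_{\mu})=\sup\{h(b_{\mu}):h\ \text{affine continuous},\ h\leq f\}$ even though this supremum need not be attained. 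For each such $h$ your computation gives $h(b_{\mu})=\frac{1}{\mu(K)}\int_{K}h\,d\mu\leq\frac{1}{\mu(K)}\int_{K}f\,d\mu$, and taking the supremum over $h$ yields the theorem. With that single substitution your argument is complete.
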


For details, see \cite{NP2006}, Theorem 4.2.1, pp. 184-185. When $E$ is the
Euclidean space $\mathbb{R}^{N},$ the barycenter $b_{\mu}$ is given by the
formula $(B)$ above.

It is worth to mention that the argument of Theorem 1 remains valid in the
context of Steffensen-Popoviciu measures of total mass 1. Even more
importantly, it can be adapted to the case of functions of two or more variables.

\begin{example}
\label{ex3}Suppose $f:[-1,2]\times\lbrack-1,1]\rightarrow\mathbb{R}$ is a
function with the following two properties: $i)$ $f(-x,y)=-f(x,y)$ for every
$x\in\lbrack-1,1]$ and $y\in\lbrack-1,1]$; and $ii)$ $f|_{[0,2]\times
\lbrack-1,1]}$ is a convex function. According to Example \ref{ex2},
$(x^{2}-\frac{1}{6})dxdy$ is a Steffensen-Popoviciu measure on the interval
$[-1,2]\times\lbrack-1,1]$ $($of total mass $\frac{10}{3}$ and barycenter
$b_{\mu}=\left(  \frac{7}{5},0\right)  ).$ Since $\frac{7}{5}>1,$ an argument
similar to that used in the proof of Theorem 1 leads us to the following
Jensen-type inequality:%
\[
f\left(  \frac{7}{5},0\right)  \leq\frac{3}{10}\iint\nolimits_{[-1,2]\times
\lbrack-1,1]}f(x,y)(x^{2}-\frac{1}{6})dxdy.
\]

\end{example}

The above discussion leads us to the concept of almost convexity, whose
1-dimensional version is as follows:

\begin{definition}
\label{LAC}A real-valued function $f$ defined on an interval $I$ is left
almost convex if it is integrable and there is a pair of interior points $c<d$
in $I$ such that

$i)$ $f|_{[c,\infty)\cap I}$ is convex; and

$ii)$ $f\geq h$ on $(-\infty,c]\cap I,$ where $h$ is the affine function
joining the points $(c,f(c))$ and $(d,f(d)).$
\end{definition}

The concept of right almost convexity can be introduced in a similar way.

The following result extends Theorem 1 above.

\begin{theorem}
\label{Thm3}Suppose that $f:[a,b]\rightarrow\mathbb{R}$ is a left almost
convex function and $\mu$ is a Steffensen-Popoviciu measure on $[a,b]$ with
barycenter $b_{\mu}$. If $c<d$ are interior points to $[a,b]$ as in Definition
\ref{LAC} such that

$i)$ $b_{\mu}\geq c$ $;$ and

$ii)$ $\int_{a}^{d}\left(  \left(  f(x)-f(c)\right)  (d-c)-\left(
f(d)-f(c)\right)  (x-c)\right)  d\mu\geq0,$

\noindent then%
\[
f(b_{\mu})\leq\frac{1}{\mu([a,b])}\,\int_{a\,}^{b}\,f(x)\,d\mu(x).
\]

\end{theorem}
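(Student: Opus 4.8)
The plan is to mimic the proof of Theorem \ref{Thm1}, replacing the symmetry hypothesis by the weaker condition $f\geq h$ on $(-\infty,c]\cap[a,b]$ and using the Popoviciu--Fink characterization of Steffensen-Popoviciu measures. First I would dispose of trivial or easy cases: if $b_\mu=b$ then $\mu$ is (a positive multiple of) $\delta_b$ by Lemma \ref{LemPF}, and the inequality is immediate; likewise if $b_\mu\geq d$ one can hope to apply Theorem \ref{Thm2} directly to $f$ restricted to $[c,b]$ after checking the induced measure is still Steffensen-Popoviciu there, but the cleanest route is to handle all cases uniformly via a majorizing convex function.

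The main construction is the auxiliary function
\[
g(x)=\begin{cases} h(x) & \text{if }x\in[a,d]\\ f(x) & \text{if }x\in[d,b],\end{cases}
\]
where $h$ is the affine function joining $(c,f(c))$ and $(d,f(d))$. By hypothesis $i)$ of Definition \ref{LAC}, $f|_{[c,b]}$ is convex, so on $[c,d]$ the chord $h$ lies above $f$ and on $[d,b]$ we have $f$ convex with $g=f$; a short argument shows $g$ is convex on all of $[a,b]$ (the one-sided derivative at $d$ of the affine piece is the slope of the chord, which is $\leq f'_+(d)$ by convexity of $f$ on $[c,b]$). Since $g$ is convex and continuous on the compact convex interval $[a,b]$ and $\mu$ is a Steffensen-Popoviciu measure, Theorem \ref{Thm2} gives $g(b_\mu)\leq \frac{1}{\mu([a,b])}\int_a^b g(x)\,d\mu(x)$.

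Now I would compare the two sides with the corresponding quantities for $f$. On the left: since $b_\mu\geq c$ by hypothesis $i)$ of Theorem \ref{Thm3}, either $b_\mu\in[c,d]$, where $g(b_\mu)=h(b_\mu)\geq f(b_\mu)$... wait, we need $g(b_\mu)\geq f(b_\mu)$, and on $[c,d]$ indeed $h\geq f$; or $b_\mu\in[d,b]$, where $g(b_\mu)=f(b_\mu)$. Either way $g(b_\mu)\geq f(b_\mu)$. On the right: $g-f$ vanishes on $[d,b]$ and equals $h-f$ on $[a,d]$, so
\[
\int_a^b g(x)\,d\mu(x)-\int_a^b f(x)\,d\mu(x)=\int_a^d (h(x)-f(x))\,d\mu(x),
\]
and hypothesis $ii)$ of Theorem \ref{Thm3} is exactly the statement that $\int_a^d (f(x)-h(x))\,d\mu(x)\geq 0$ after expanding $h(x)=f(c)+\frac{f(d)-f(c)}{d-c}(x-c)$ and clearing the positive factor $(d-c)$; hence $\int_a^d (h(x)-f(x))\,d\mu(x)\leq 0$. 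Combining, $f(b_\mu)\leq g(b_\mu)\leq \frac{1}{\mu([a,b])}\int_a^b g\,d\mu\leq \frac{1}{\mu([a,b])}\int_a^b f\,d\mu$, which is the claim.

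The step I expect to require the most care is verifying that $g$ is genuinely convex on the whole interval $[a,b]$ and, more subtly, that the sign of $\mu([a,b])$ does not spoil the direction of the inequalities when multiplying through — here one uses that a Steffensen-Popoviciu measure has $\mu([a,b])>0$ by definition, so division preserves the inequality. A secondary point worth double-checking is that Theorem \ref{Thm2} is being applied legitimately: $g$ is continuous (the two pieces agree at $d$) and convex, $[a,b]$ is compact convex, and $\mu$ is Steffensen-Popoviciu on $[a,b]$, so the barycenter $b_\mu$ exists and the convex Jensen inequality of Theorem \ref{Thm2} applies verbatim; no extra hypotheses (such as endpoint conditions beyond those in Lemma \ref{LemPF}) are needed.
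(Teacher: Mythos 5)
Your proof is correct and follows essentially the same route as the paper: the same auxiliary convex function $g$ (affine $h$ on $[a,d]$, equal to $f$ on $[d,b]$), an application of Theorem \ref{Thm2} to $g$, and the observation that hypothesis $ii)$ is exactly $\int_a^d(f-h)\,d\mu\geq 0$ after clearing the factor $d-c>0$. In fact you are slightly more careful than the paper at one point: when $b_\mu\in[c,d)$ one only has $g(b_\mu)=h(b_\mu)\geq f(b_\mu)$ rather than the equality $g(b_\mu)=f(b_\mu)$ asserted in the printed proof, and your version handles this correctly.
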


\begin{proof}
If $h$ is the affine function joining the points $(c,f(c))$ and $(d,f(d)),$
then the function%
\[
g(x)=\left\{
\begin{array}
[c]{cl}%
h(x) & \text{if }x\in\lbrack a,d]\\
f(x) & \text{if }x\in\lbrack d,b]
\end{array}
\right.
\]
is convex and $g(b_{\mu})=f(b_{\mu}).$ According to Theorem \ref{Thm2},%
\begin{align*}
f(b_{\mu})  &  =g(b_{\mu})\leq\frac{1}{\mu([a,b])}\int_{a\,}^{b}%
\,g(x)\,d\mu(x)\\
&  =\frac{1}{\mu([a,b])}\int_{a\,}^{d}\,h(x)\,d\mu(x)+\frac{1}{\mu([a,b])}%
\int_{d\,}^{b}\,f(x)\,d\mu(x)\\
&  \leq\frac{1}{\mu([a,b])}\int_{a\,}^{d}\,f(x)\,d\mu(x)+\frac{1}{\mu
([a,b])}\int_{d\,}^{b}\,f(x)\,d\mu(x)\\
&  =\frac{1}{\mu([a,b])}\int_{a}^{b}\,f(x)\,d\mu(x).
\end{align*}

\end{proof}

An illustration of Theorem \ref{Thm3} is offered by the following constrained
optimization problem: Find%
\[
M=\max_{(x,y,z)\in\Omega}\left[  \tan\left(  \frac{2x-y+3z}{4}\right)
-\frac{2\tan x-\tan y+3\tan z}{4}\right]  ,
\]
where $\Omega$ is the set of triplets $x\leq y\leq z$ in $[-\pi/3,\pi/3)$ such
that $2x-y+3z\geq0$ and%
\begin{equation}
\frac{\pi}{3\sqrt{3}}\left(  2\tan x-\tan y+3\tan z\right)  \geq2x-y+3z.
\tag{$3$}\label{3}%
\end{equation}
The answer is $M=0$. Indeed, according to (\ref{dSt}), the measure $\frac
{1}{2}\delta_{x}-\frac{1}{4}\delta_{y}+\frac{3}{4}\delta_{z}$ is
Steffensen-Popoviciu on the interval $[-\pi/3,\pi/3]$ and the constraint
(\ref{3}) coincides with the inequality $ii)$ in Theorem 3 (when applied to
the tangent function for $a=-\pi/3,$ $c=0$ and $b=d=\pi/3$).

Of course, the phenomenon of almost convexity is present also in higher
dimensions, at least for functions defined on \thinspace$N$-dimensional
intervals (or on other convex sets with a special geometry). It would be
interesting in that context to prove a characterization of the
Steffensen-Popoviciu measures (comparable to that offered by Lemma \ref{LemPF}).

\medskip

\noindent\textbf{Acknowledgement}. We acknowledge useful correspondence with
Florin Popovici. This work is supported by a grant of the Romanian National
Authority for Scientific Research, CNCS -- UEFISCDI, project number PN-II-ID-PCE-2011-3-0257.

\end{document}